\newcommand{\ignore}[1]{}
\newtheorem{theorem}{Theorem}[section]
\newtheorem{lemma}[theorem]{Lemma}
\newtheorem{claim}[theorem]{Claim}
\newcommand{\Proof}[1]
        {
        \noindent
        \emph{Proof #1.}~
        }
\newsavebox{\smallProofsym}                     
\newcommand{\smalleop}[1]
        {
        \mbox{} \hfill #1~~\usebox{\smallProofsym}\!\!\!\!\!\!\
        }
\newcommand{\parag}[1]{\vspace{2mm}

\noindent{\bf #1} }
\newcommand{\ZZ}{\ensuremath{\mathbb Z}}
\newcommand{\RR}{\ensuremath{\mathbb R}}
\def\eps{{\varepsilon}}
\begin{document}
\pagenumbering{arabic}

\title{A Construction for Difference Sets with Local Properties\footnote{This research project was done as part of the 2018 CUNY Combinatorics REU, supported by NSF grant DMS-1710305.}}

\author{
Sara Fish\thanks{California Institute of Technology, Pasadena, CA, USA.
{\sl sfish@caltech.edu}. Supported by Caltech's Summer Undergraduate Research
Fellowships (SURF) program.}
\and
Ben Lund\thanks{Department of Mathematics, Princeton University, Princeton, NJ, USA.
{\sl lund.ben@gmail.com}. Supported by NSF grant 1802787.}
\and
Adam Sheffer\thanks{Department of Mathematics, Baruch College, City University of New York, NY, USA.
{\sl adamsh@gmail.com}. Supported by NSF award DMS-1710305 and PSC-CUNY award 61666-00-49.}}

\date{}
\maketitle
\begin{abstract}
We construct finite sets of real numbers that have a small difference set and strong local properties.
In particular, we construct a set $A$ of $n$ real numbers such that $|A-A|=n^{\log_2 3}$ and that every subset $A'\subseteq A$ of size $k$ satisfies $|A'-A'|\ge k^{\log_2 3}$.
This construction leads to the first non-trivial upper bound for the problem of distinct distances with local properties.
\end{abstract}

\section{Introduction}

The \emph{Erd\H os distinct distances problem} is a main problem in Discrete Geometry.
This problem asks for the minimum number of distinct distances spanned by any $n$ points in $\RR^2$.
In 1946, Erd\H os \cite{erd46} observed that a $\sqrt{n}\times \sqrt{n}$ section of the integer lattice $\ZZ^2$ spans $\Theta(n/\sqrt{\log n})$ distinct distances.
He then conjectured that no set of $n$ points spans an asymptotically smaller number of distances.
Proving that every $n$ points determine at least some number of distinct distances turned out to be a deep and challenging problem.
After over 60 years and many papers on the problem, Guth and Katz \cite{GK15} proved that every set of $n$ points in $\RR^2$ spans $\Omega(n/ \log n)$ distances, almost completely settling Erd\H os's conjecture.

In the current work we study a variant of the distinct distances problem, also posed by Erd\H os (for example, see \cite{Erdos86}).
Even after Guth and Katz introduced sophisticated tools for studying distinct distances, this problem remains wide open.
For parameters $n,k,\ell$, we consider sets of $n$ points in $\RR^2$ with the property that every $k$ of the points span at least $\ell$ distinct distances.
Let $\phi(n,k,\ell)$ denote the minimum number of distinct distances such a point set can span.
That is, by having a local property of every small subset of points, we wish to obtain a global property of the entire point set.

As a first example, consider $\phi(n,3,3)$.
This is the minimum number of distinct distances spanned by $n$ points that do not span any isosceles triangles (including degenerate triangles with three collinear vertices).
Since no isosceles triangles are allowed, every point spans $n-1$ distinct distances with the other points, implying that $\phi(n,3,3) \ge n-1$.
Erd\H os \cite{Erdos86} derived the bound $\phi(n,3,3) < n2^{O(\sqrt{\log n})}$, leaving a subpolynomial gap in this case.

As another simple example, for any $k\ge 4$ we have
\[ \phi\left(n,k,\binom{k}{2}-\left\lfloor k/2 \right\rfloor +2 \right) = \Omega\left(n^2\right). \]
Indeed, if some distance occurs $\lfloor k/2 \rfloor$ times, then the corresponding points form a set of at most $k$ points with at most $\binom{k}{2}-\left\lfloor k/2 \right\rfloor +1$ distances.
This violates the local condition, so in this case every distance occurs at most $\lfloor k/2 \rfloor -1$ times.

A result of Erd\H os and Gy\'arf\'as \cite{EG97} implies
\[ \phi\left(n,k,\binom{k}{2}-\lfloor k/2 \rfloor +1\right) = \Omega\left(n^{4/3}\right). \]
That is, the boundary between a trivial problem and a non-trivial one passes between $\ell \ge \binom{k}{2}-\lfloor k/2 \rfloor +2$ and $\ell \le \binom{k}{2}-\lfloor k/2 \rfloor +1$.
(Recently, a stronger lower bound for this case was derived by Fish, Pohoata, and Sheffer \cite{FPS18}.)

Fox, Pach, and Suk \cite{FPS17} proved that for every $\eps>0$,
\begin{equation} \label{eq:FPS}
\phi\left(n,k,\binom{k}{2}-k+6\right) = \Omega\left(n^{8/7-\eps}\right).
\end{equation}
Moreover, Pohoata and Sheffer \cite{PS18} showed that for every $k>m\ge 2$,
\[ \phi\left(n,k,\binom{k}{2} - m\cdot \left\lfloor \frac{k}{m+1} \right\rfloor + m +1 \right) = \Omega\left(n^{1+ 1/m}\right). \]
For example, setting $m=2$ yields $\phi\left(n,k,\binom{k}{2} - 2\cdot \lfloor \frac{k}{3} \rfloor + 4 \right) = \Omega\left(n^{3/2}\right)$.
This bound is stronger than \eqref{eq:FPS} but holds for a smaller range of $\ell$.

No non-trivial upper bounds are known for $\ell =\binom{k}{2}-k+6$, for $\ell= \binom{k}{2} - m\cdot \left\lfloor \frac{k}{m+1} \right\rfloor + m +1$ with any $m\ge2$, or for any other similar values of $\ell$.
We conjecture that the bound $\phi\left(n,k,\binom{k}{2}-k+6\right)=\Omega\left(n^{2-\eps}\right)$ should hold for all of these cases.
This huge gap illustrates our current lack of understanding of the problem of distances with local properties.
By adapting a standard argument for double counting isosceles triangles, one obtains the bound $\phi\left(n,2k, \binom{2k}{2} - 2\cdot \binom{k}{2} + 1\right) = \Omega(n)$.
Once again, no non-trivial lower bounds are known for this case, which leads to an even larger gap.
More details about distances with local properties can be found in the survey \cite{Sheffer14}.

The goal of the current work is to provide a first non-trivial upper bound for the problem of distinct distances with local properties.
\begin{theorem}\label{th:DistUpper}
For every $n\ge k > 0$, we have 
\[ \phi\left(n,k,k^{\log_2 3}/2-1\right) =O\left(n^{\log_2 3}\right).\]
\end{theorem}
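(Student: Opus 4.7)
The plan is to construct $A$ explicitly, establish that every subset of $A$ satisfies a strong lower bound on its difference set, and then transport the result to the distance setting by placing $A$ on a line.

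First, I would set $d=\lceil\log_2 n\rceil$ and define
\[
A \;=\; \Bigl\{\sum_{i=0}^{d-1}\epsilon_i 3^i : \epsilon_i\in\{0,1\}\Bigr\},
\]
taking an arbitrary subset of size exactly $n$ if $n$ is not a power of two. Since every element of $A-A$ has a unique representation as $\sum\delta_i 3^i$ with $\delta_i\in\{-1,0,1\}$, we get $|A|=2^d$ and $|A-A|=3^d=|A|^{\log_2 3}$. The natural bijection $A\leftrightarrow\{0,1\}^d$ via digits converts the problem into the purely combinatorial statement: for every $S\subseteq\{0,1\}^d$ of size $k$, the coordinate-wise difference set $S-S\subseteq\{-1,0,1\}^d$ has size at least $k^{\log_2 3}$.

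The central combinatorial lemma would be proved by induction on $d$. Split $S$ by its last coordinate into $T_0,T_1\subseteq\{0,1\}^{d-1}$; the three slabs of $\{-1,0,1\}^d$ (last coordinate $0$, $+1$, $-1$) disjointly partition $S-S$ and yield the identity
\[
|S-S|=\bigl|(T_0-T_0)\cup(T_1-T_1)\bigr|+2\,|T_0-T_1|.
\]
When $T_0\cap T_1=\emptyset$, the inclusion $(T_0\cup T_1)-(T_0\cup T_1)\subseteq (T_0-T_0)\cup(T_1-T_1)\cup(T_0-T_1)\cup(T_1-T_0)$ combined with the inductive bound $|(T_0\cup T_1)-(T_0\cup T_1)|\ge(k_0+k_1)^{\log_2 3}$ closes the argument at once. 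When $T_0=T_1$, the identity collapses to $|S-S|=3|T_0-T_0|$, and the inductive bound $|T_0-T_0|\ge(k/2)^{\log_2 3}$ gives exactly $k^{\log_2 3}$, matching the construction.

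The main obstacle will be the intermediate case in which $T_0\cap T_1$ is non-empty but $T_0\neq T_1$: the disjoint argument loses ground because $|T_0\cup T_1|<k_0+k_1$, while the equal-set collapse does not apply. To handle it I plan to combine two lower bounds on $|T_0-T_1|$: the sumset bound $|T_0-T_1|\ge k_0+k_1-1$ valid in the torsion-free ambient group $\ZZ^{d-1}$, and the Ruzsa-triangle-inequality bound $|T_0-T_1|^2\ge |T_1|\cdot|T_0-T_0|$, which transfers the full inductive strength of $|T_0-T_0|\ge k_0^{\log_2 3}$ into $|T_0-T_1|$ precisely when $T_1$ shares structure with $T_0$. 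Balancing these inside the key identity, with a case split on $|T_0\cap T_1|$, should close the gap; this will be the most technical part of the proof.

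Finally, to derive Theorem~\ref{th:DistUpper}, place $A$ on the $x$-axis as the point set $P=A\times\{0\}\subseteq\RR^2$. Distances in $P$ are absolute values of non-zero elements of $A-A$, so the total number of distinct distances is $(|A-A|-1)/2=O(n^{\log_2 3})$. For every $k$-element subset $P'\subseteq P$ corresponding to $A'\subseteq A$, the difference-set lemma gives $|A'-A'|\ge k^{\log_2 3}$, and hence $P'$ spans at least $(|A'-A'|-1)/2\ge k^{\log_2 3}/2-1$ distinct distances. This yields $\phi(n,k,k^{\log_2 3}/2-1)=O(n^{\log_2 3})$, as claimed.
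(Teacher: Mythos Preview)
Your construction and the reduction from the difference-set lemma to Theorem~\ref{th:DistUpper} are essentially identical to the paper's (base-$3$ integers versus generically projected hypercube vertices is immaterial), and your handling of the extreme cases $T_0\cap T_1=\varnothing$ and $T_0=T_1$ is correct. The gap is exactly where you flag it: the intermediate case. The tools you propose do not close it. From the single-set inductive hypothesis together with Ruzsa's triangle inequality you obtain at best
\[
|T_0-T_1|\ \ge\ \sqrt{|T_1|\cdot|T_0-T_0|}\ \ge\ k_1^{1/2}k_0^{(\log_2 3)/2},
\]
which has exponent $(1+\log_2 3)/4\approx 0.646$ in the balanced regime, strictly below the $p=\log_4 3\approx 0.792$ that is needed. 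Concretely, take $T_1=\{0,1\}^{d-1}$ and $T_0=T_1\setminus\{v\}$: here $k_0=k_1-1$, $|(T_0-T_0)\cup(T_1-T_1)|=3^{d-1}$, and to reach $(2k_1-1)^{\log_2 3}$ you must show $|T_0-T_1|\gtrsim k_1^{\log_2 3}$, whereas your Ruzsa bound gives only about $k_1^{1.29}$ and the sumset bound $k_0+k_1-1$ is far weaker still. No case split on $|T_0\cap T_1|$ rescues this, because the deficiency is multiplicative in $k$.

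The paper's remedy is to \emph{strengthen the inductive hypothesis}: instead of bounding $|S-S|$, it proves that for any two subsets $Q_1,Q_2$ lying in (translated) copies of $P_\ell$ one has $|Q_1-Q_2|\ge(|Q_1|\,|Q_2|)^{p}$ with $p=\log_4 3$. Splitting $Q_1=A\cup B$ and $Q_2=C\cup D$ by the top coordinate, the three slabs give
\[
|Q_1-Q_2|\ \ge\ |A-D|+|B-C|+\max\{|A-C|,|B-D|\},
\]
and now the inductive hypothesis applies \emph{directly} to each cross term, yielding $(ad)^p+(bc)^p+(ac)^p$ (with $ac\ge bd$). The induction then reduces to the purely numerical inequality $(ac)^p+(ad)^p+(bc)^p\ge((a+b)(c+d))^p$, which is what the paper spends its effort on. The point is that the two-set formulation lets induction, rather than Ruzsa, control $|T_0-T_1|$; your single-set hypothesis is simply too weak to carry that load.
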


Note that $\log_2(3)\approx 1.585$.
Thus, Theorem \ref{th:DistUpper} provides a range of $\ell$ for which $\phi(n,k,\ell)$ is significantly smaller than $n^2$.
Unlike previous results for local properties problems, in Theorem \ref{th:DistUpper} the parameter $k$ is allowed to depend on $n$.

\parag{An Additive Combinatorics variant.}
Another variant of the local properties problem concerns sets of real numbers.
For a set $A\subset \RR$, we define the \emph{difference set} of $A$ as
\[ A-A = \left\{a-a' :\ a,a'\in A \right\}. \]

For parameters $n,k,\ell$, we consider sets $A$ of $n$ real numbers with the property that every subset $A'\subset A$ of size $k$ satisfies $|A'-A'| \ge \ell$.
Let $g(n,k,\ell)$ denote the minimum number of distinct distances that such a set of $n$ reals can span.
The known upper bounds for $g(n,k,\ell)$ are significantly stronger than the equivalent bounds for $\phi(n,k,\ell)$.
In particular, Fish, Pohoata, and Sheffer \cite{FPS18} derive a family of bounds such as
\begin{equation} \label{eq:ArithLower}  
g\left(n,k,2\cdot \binom{k}{2} - 4\cdot \binom{k/2}{2} +2\right) = \Omega\left(n^{2-\frac{8}{k}}\right). 
\end{equation}

Our upper bound construction is actually for $g(n,k,\ell)$.

\begin{theorem}\label{th:ArithUpper}
For every $n>k\ge 0$ such that $n$ is a power of two, we have
\[ g\left(n,k,k^{\log_2 3}\right) \le n^{\log_2 3}.\]
\end{theorem}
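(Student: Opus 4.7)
Let $m = \log_2 n$ and define the Cantor-like set
\[
A := \Bigl\{\sum_{j=0}^{m-1} \eps_j\, 3^j \;:\; \eps_j \in \{0,1\}\Bigr\}.
\]
Since each integer has at most one balanced-ternary representation with digits in $\{-1,0,1\}$, one gets $|A| = 2^m = n$ and $|A - A| = 3^m = n^{\log_2 3}$, meeting the global bound.

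\textbf{Local property by induction on $m$.} Write $A = A_{m-1}\sqcup (A_{m-1} + 3^{m-1})$, where $A_{m-1}$ denotes the Cantor-like set of depth $m-1$. Given $A' \subseteq A$ with $|A'| = k$, split it as $A' = A'_0 \sqcup A'_1$ accordingly, and let $B := A'_1 - 3^{m-1}$, so that $A'_0, B \subseteq A_{m-1}$ with $|A'_0| + |B| = k$. Because $3^{m-1}$ exceeds the diameter of $A_{m-1}$, the three ``ternary slices'' of $A' - A'$ (indexed by top digit $-1$, $0$, $+1$) are pairwise disjoint, giving the exact identity
\[
|A' - A'| = \bigl|(A'_0 - A'_0) \cup (B - B)\bigr| + 2\,|A'_0 - B|.
\]
Setting $X := (A'_0 - A'_0) \cup (B - B)$, $Z := A'_0 - B$, $U := A'_0 \cup B$, $k_0 := |A'_0|$ and $k_1 := |B|$, the inductive hypothesis applied to $A'_0$, $B$, and $U$ gives $|A'_0 - A'_0| \geq k_0^{\log_2 3}$, $|B - B| \geq k_1^{\log_2 3}$, and $|U - U| \geq |U|^{\log_2 3}$.

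\textbf{Easy cases.} When $A'_0 \cap B = \emptyset$, we have $|U| = k$, and since $U - U = X \cup Z \cup (-Z)$ as subsets of $A_{m-1} - A_{m-1}$, we get $|A' - A'| = |X| + 2|Z| \geq |X \cup Z \cup (-Z)| = |U - U| \geq k^{\log_2 3}$. When $A'_0 = B$, we have $X = Z = A'_0 - A'_0$, so the identity collapses to $|A' - A'| = 3\,|A'_0 - A'_0| \geq 3(k/2)^{\log_2 3} = k^{\log_2 3}$.

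\textbf{Main obstacle.} The intermediate regime, where $T := A'_0 \cap B$ is a proper nonempty subset of both $A'_0$ and $B$, is the crux: here $|U| = k - |T| < k$, so $|U - U| \geq |U|^{\log_2 3}$ alone is too weak, and the missing mass must come from overlaps among $X$, $Z$, $-Z$ inside $A_{m-1} - A_{m-1}$. Inclusion-exclusion yields the lower bound $|A' - A'| \geq |U - U| + 2\,|X \cap Z|$, and one checks that
\[ X \cap Z \supseteq (T - T) \cup ((A'_0 \setminus T) - T) \cup (T - (B \setminus T)), \]
each piece sitting in both $X$ and $Z$ via the containments $T \subseteq A'_0$ and $T \subseteq B$. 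The hard part will be to combine the IH bound $|T - T| \geq |T|^{\log_2 3}$ with the sum-difference inequality $|P - Q| \geq |P| + |Q| - 1$ in $\mathbb{Z}$ applied to the other two pieces---and possibly a further inductive bound on cross-differences between two subsets of $A_{m-1}$---so as to drive $|U - U| + 2|X \cap Z| \geq k^{\log_2 3}$ for all values of $|T|$, closing the induction.
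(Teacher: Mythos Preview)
Your construction is the integer realization of the paper's set (powers of $3$ playing the role of the $r_j$), and the global count $|A-A|=3^m$ is correct for the same reason. Your two ``easy cases'' are also fine; in particular the identity $|A'-A'|=|X|+2|Z|$ and the inequality $|A'-A'|\ge |U-U|+2|X\cap Z|$ both hold.

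The gap is in the ``main obstacle,'' and it is a real one: the tools you propose --- the IH bound $|T-T|\ge |T|^{\log_2 3}$ together with the elementary $|P-Q|\ge |P|+|Q|-1$ --- cannot close the induction. Take $|A'_0|=|B|=k/2$ with $|T|=k/4$, so $|U|=3k/4$. The deficit you must cover is $k^{\log_2 3}-(3k/4)^{\log_2 3}=\bigl(1-(3/4)^{\log_2 3}\bigr)k^{\log_2 3}\approx 0.366\,k^{\log_2 3}$. From your containment for $X\cap Z$, the piece $T-T$ contributes at most $(k/4)^{\log_2 3}=k^{\log_2 3}/9$ by IH, while $(A'_0\setminus T)-T$ and $T-(B\setminus T)$ via $|P-Q|\ge|P|+|Q|-1$ contribute only $O(k)$ each. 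Doubling, you reach roughly $(2/9)k^{\log_2 3}+O(k)\approx 0.222\,k^{\log_2 3}$, which falls short of $0.366\,k^{\log_2 3}$ for large $k$.

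Your parenthetical ``possibly a further inductive bound on cross-differences'' is exactly the missing idea, and the paper makes it the induction hypothesis: for any two subsets $Q_1,Q_2$ lying in (translates of) $A_\ell$, one has $|Q_1-Q_2|\ge(|Q_1|\,|Q_2|)^{p}$ with $p=\log_4 3$. Splitting $Q_1=A\sqcup B$ and $Q_2=C\sqcup D$ along the top digit, the three ternary slices are disjoint and give
\[
|Q_1-Q_2|\ \ge\ (ad)^p+(bc)^p+\max\{(ac)^p,(bd)^p\},
\]
with $a=|A|$, etc. The step then reduces to the analytic inequality $(ac)^p+(ad)^p+(bc)^p\ge\bigl((a+b)(c+d)\bigr)^p$ whenever $ac\ge bd$, tight exactly when $a=b$ and $c=d$ (your easy case~2). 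The single-set bound $|Q-Q|\ge|Q|^{\log_2 3}$ is then just the diagonal $Q_1=Q_2=Q$. So: promote the cross-difference remark from an afterthought to the inductive statement, and drop the $U$-route.
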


If $n$ is not a power of two, the proof of Theorem \ref{th:ArithUpper} still holds,  leading to the slightly weaker bound $g\left(n,k,k^{\log_2 3}\right) < 3n^{\log_2 3}$.

Note that $g(n,k,\ell)$ can be thought of as $\phi(n,k,\ell)$ when all of the points are on the $x$-axis.
In this case, every positive element of the difference set becomes a distance, and every non-positive element is ignored.
Thus, every upper bound for $g(n,k,\ell)$ is also an upper bound for $\phi(n,k,\ell/2-1)$.
In particular, Theorem \ref{th:DistUpper} is an immediate corollary of Theorem \ref{th:ArithUpper}.

To prove Theorem \ref{th:ArithUpper}, we construct a set $P$ of $n$ real numbers such that $|P-P|=n^{\log_2 3}$ and every small subset of $P$ spans many distinct differences.
Our set $P$ does not only satisfy the local property for one given $k$, but rather satisfies it simultaneously for every $k<n$.

Our approach is somewhat similar to a construction of points in $\RR^2$ of Erd\H os, Hickerson, and Pach \cite{EHP89}.
Moreover, it can be thought of as a generalization of Sidon sets (for example, see \cite{Obryant04}).
In a Sidon set every four elements span 13 distinct differences, while in our case every $k$ elements span at least $k^{\log_2 3}$ distinct differences.
Sidon sets consist only of integers, while our construction involves irrational numbers.
However, it is not difficult to revise our construction so that it uses only integers.
Our construction is also similar to Sidon sets in the sense that it spans few differences.

Some works on local properties problems, such as \cite{FPS18,PS18}, consider the difference set as containing only the positive differences.
This change does not affect the problem, since the asymptotic size of $A-A$ remains unchanged when moving from one definition to the other.
The current work relies on the standard definition of $A-A$, which should be kept in mind when comparing it to others.
For example, the bound \eqref{eq:ArithLower} looks slightly differently in \cite{FPS18} because of this difference of notation.

\parag{Acknowledgements.}
We would also like to thank Robert Krueger, Rados Radoicic, and Pablo Soberon for several helpful discussions.

\section{The construction} \label{sec:Const}

Let $n$ be a positive integer.
Consider positive irrational numbers $r_1,\ldots,r_n\in \RR$ such that the only integer solution to
\[ x_1\cdot r_1 + x_2\cdot r_2 + \cdots + x_{n} \cdot r_{n} = 0, \]
is $x_1=x_2=\cdots=x_{n}=0$.

For a set $A=\{a_1,\ldots,a_m\} \subset \RR$ and another number $b\in \RR$, we write
\[ b+A = \{b+a_1,b+a_2,\ldots,b+a_m\}. \]

For every integer $0\le j \le n$, we define a set $P_j$ of $2^j$ real numbers, as follows.
We set $P_0 = \{1\}$.
For $1\le j \le n$, we define $P_j = P_{j-1} \bigcup (r_j + P_{j-1})$.

The set $P_n$ can be thought of as a projection of the hypercube $\{0,1\}^n$ from $\RR^n$ onto a line.
In particular, this is the transformation that takes the $j$'th element of the standard basis of $\RR^n$ to $r_j$, and the origin to 1.
This is similar to a construction of Erd\H os, Hickerson, and Pach \cite{EHP89}, which is obtained by a projection of the $n$-dimensional hypercube onto a generic two-dimensional plane.

\begin{claim} \label{cl:DiffProps}
$|P_n - P_n| = |P_n|^{\log_2 3}$.
\end{claim}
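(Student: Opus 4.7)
The plan is to unwind the recursive definition and exploit the integer linear independence of $r_1,\ldots,r_n$ at every step. First I would show by induction on $j$ that every element of $P_j$ can be written uniquely in the form $1 + \sum_{i=1}^{j} \epsilon_i r_i$ with $\epsilon_i \in \{0,1\}$. The base case $j=0$ is immediate. For the inductive step, $P_j = P_{j-1}\cup (r_j + P_{j-1})$, so any element of $P_j$ has the form $1 + \sum_{i=1}^{j}\epsilon_i r_i$ with $\epsilon_j \in \{0,1\}$ and $\epsilon_1,\ldots,\epsilon_{j-1}\in\{0,1\}$ by induction. Uniqueness follows because an equality between two such representations yields a nontrivial integer combination of $r_1,\ldots,r_j$ equal to zero with coefficients in $\{-1,0,1\}$, contradicting the defining property of the $r_i$. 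In particular $|P_n|=2^n$.

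Next I would analyze the difference set. Any element of $P_n - P_n$ has the form
\[
\left(1 + \sum_{i=1}^n \epsilon_i r_i\right) - \left(1 + \sum_{i=1}^n \delta_i r_i\right) = \sum_{i=1}^n (\epsilon_i - \delta_i)\, r_i,
\]
with each $\epsilon_i - \delta_i \in \{-1,0,1\}$. Conversely, every vector $(c_1,\ldots,c_n) \in \{-1,0,1\}^n$ is realized: take $\epsilon_i = \max(c_i,0)$ and $\delta_i = \max(-c_i,0)$. To count the differences, suppose two such representations coincide; subtracting gives a nontrivial integer combination of the $r_i$'s with coefficients in $\{-2,-1,0,1,2\}$ that sums to zero. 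By the integer linear independence hypothesis, all these coefficients vanish, so the representation is unique. Hence $|P_n - P_n|$ equals the number of vectors in $\{-1,0,1\}^n$, which is $3^n$.

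Finally, I would combine the counts: $|P_n - P_n| = 3^n = 2^{n\log_2 3} = (2^n)^{\log_2 3} = |P_n|^{\log_2 3}$. No step should present a real obstacle; the whole argument is essentially the observation that $P_n$ is a faithful projection of the Boolean cube $\{0,1\}^n$ and its difference set is a faithful projection of $\{-1,0,1\}^n$, with faithfulness guaranteed by the integer linear independence of $r_1,\ldots,r_n$. The one place to be careful is that the coefficients appearing when testing equality of two differences lie in $\{-2,\ldots,2\}$ rather than $\{-1,0,1\}$, but the hypothesis is stated for all integer combinations, so this is harmless.
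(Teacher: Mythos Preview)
Your argument is correct and follows essentially the same route as the paper: identify $P_n$ with $\{0,1\}^n$ and $P_n-P_n$ with $\{-1,0,1\}^n$ via the integer linear independence of $r_1,\ldots,r_n$, then compute $3^n=(2^n)^{\log_2 3}$. You are simply more explicit than the paper about why $|P_n|=2^n$, about surjectivity onto $\{-1,0,1\}^n$, and about the coefficient range $\{-2,\ldots,2\}$ when checking injectivity; the paper takes these points for granted.
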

\begin{proof}
Every difference in $P_n-P_n$ is of the form $x_1\cdot r_1 + x_2\cdot r_2 + \cdots + x_{n} \cdot r_{n}$ with $x_1,x_2,\ldots,x_n\in \{-1,0,1\}$.
By the definition of the $r_1,\ldots, r_n$, all of these differences are distinct, so $|P_n - P_n|=3^n$.
Recalling that $|P_n|=2^n$, we conclude that $|P_n - P_n| = 3^n = |P_n|^{\log_2 3}$.
\end{proof}

Set $p=\log_4 3$.
We require the following straightforward property.

\begin{claim} \label{cl:x^p}
Every positive $a,b\in\RR$ satisfy $a^p+b^p>(a+b)^p$.
\end{claim}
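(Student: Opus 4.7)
The inequality is the standard subadditivity of $x \mapsto x^p$ on $(0,\infty)$ for an exponent $p\in(0,1)$, and my plan exploits exactly this. First I observe that $p = \log_4 3$ satisfies $0 < p < 1$, since $3 < 4$. Next I use the homogeneity of the inequality: both sides are positive and scale by the common factor $s^p$ under the substitution $(a,b)\mapsto(sa,sb)$, so it suffices to prove the statement after normalizing $a+b=1$. In this case $a,b\in(0,1)$.

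With this reduction in hand, the key elementary fact is that for any $x\in(0,1)$ and any exponent $p\in(0,1)$, one has $x^p > x$. This follows immediately from $x^p/x = x^{p-1}$ together with $p-1<0$ and $0<x<1$, which gives $x^{p-1}>1$. Applying this to $a$ and to $b$ and summing yields $a^p + b^p > a + b = 1 = (a+b)^p$, as required. Scaling back by the suppressed factor $s^p$ recovers the stated inequality in full generality.

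There is essentially no obstacle here; the only mild care needed is to make the normalization step rigorous and to confirm strict (rather than merely weak) inequality, both of which fall out of the strict inequality $x^p > x$ on the open interval $(0,1)$. An entirely analogous alternative, if one prefers to avoid the scaling step, is to fix $b>0$, divide through by $b^p$, set $t = a/b > 0$, and analyze $f(t) = t^p + 1 - (t+1)^p$: then $f(0)=0$ and $f'(t) = p\bigl(t^{p-1} - (t+1)^{p-1}\bigr) > 0$ because $p-1<0$ makes $t^{p-1}$ strictly decreasing in $t$, so $f(t)>0$ for all $t>0$.
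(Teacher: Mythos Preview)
Your proof is correct. Both the normalization argument and the alternative derivative argument are valid, and the only fact about $p$ used is $0<p<1$, which you verify.

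The paper's proof is slightly different in flavor. Rather than normalizing via homogeneity, it fixes $a+b$ and slides mass from $b$ to $a$: assuming $a\ge b$, it sets $f(x)=(a+x)^p+(b-x)^p$, computes $f'(x)=p\bigl((a+x)^{p-1}-(b-x)^{p-1}\bigr)<0$ on $(0,b)$ (again using $p-1<0$), and concludes $f(0)>f(b)$, i.e.\ $a^p+b^p>(a+b)^p$. Your primary argument is arguably cleaner: the homogeneity reduction together with the one-line fact $x^p>x$ for $x\in(0,1)$ avoids any calculus. Your alternative is closest to the paper's approach---both differentiate and exploit that $t\mapsto t^{p-1}$ is decreasing---but you vary the ratio $a/b$ while the paper varies the split with $a+b$ held fixed. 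All three routes are essentially equivalent in difficulty; yours has the minor advantage of making the role of $0<p<1$ completely transparent.
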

\begin{proof}
Without loss of generality, assume that $a\ge b$.
Let $f(x)= (a+x)^p + (b-x)^p$.
Then $f'(x) =  p(a+x)^{p-1} -p(b-x)^{p-1}$.
Since $p-1<0$, when $0< x<b$ we have that $(b-x)^{p-1} >(a+x)^{p-1}$, which in turn implies $f'(x)<0$.
Since $f(x)$ is continuous and decreasing in $(0,b)$, we obtain that $f(0)>f(b)$.
Plugging in the values of $f(0)$ and $f(b)$ yields the assertion of the claim.
\end{proof}

It remains to show that $P_n$ satisfies a local property.

\begin{lemma}\label{le:subset}
Let $Q$ be a subset of $P_n$ of size $k$.
Then $|Q-Q| \geq k^{\log_2 3}$.
\end{lemma}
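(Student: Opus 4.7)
I propose to prove Lemma~\ref{le:subset} by establishing a stronger inductive claim: for every pair of subsets $A, B \subseteq P_n$,
\[ |A - B| \;\geq\; (|A| \cdot |B|)^{\log_4 3}. \]
Setting $A = B = Q$ recovers the lemma, since $(k \cdot k)^{\log_4 3} = k^{\log_2 3}$. The base case $n = 0$ is trivial, as $|P_0| = 1$.

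For the inductive step, I split $A, B$ along the recursion $P_n = P_{n-1} \cup (r_n + P_{n-1})$: let $A_0 = A \cap P_{n-1}$, $A_1 = A \cap (r_n + P_{n-1})$, and write $A_1' = A_1 - r_n \subseteq P_{n-1}$; define $B_0, B_1, B_1'$ analogously, and set $a_i = |A_i|$, $b_j = |B_j|$. The $\mathbb{Z}$-independence of $r_1, \ldots, r_n$ forces each element of $A - B$ to have a uniquely determined coefficient on $r_n$ from $\{-1, 0, +1\}$, so the three slabs are pairwise disjoint and
\[ |A - B| \;=\; \bigl|(A_0 - B_0) \cup (A_1' - B_1')\bigr| + |A_1' - B_0| + |A_0 - B_1'|. \]
Corner cases where some $a_i$ or $b_j$ is zero reduce directly via Claim~\ref{cl:x^p}. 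In the main case $a_0, a_1, b_0, b_1 \geq 1$, I assume WLOG $a_0 b_0 \geq a_1 b_1$ and bound the $r_n$-free slab below by $|A_0 - B_0| \geq (a_0 b_0)^{\log_4 3}$ and the other two slabs by the inductive hypothesis, reducing the inductive step to the arithmetic inequality
\[ \bigl((a_0+a_1)(b_0+b_1)\bigr)^{\log_4 3} \;\leq\; (a_0 b_0)^{\log_4 3} + (a_0 b_1)^{\log_4 3} + (a_1 b_0)^{\log_4 3} \qquad \text{when } a_0 b_0 \geq a_1 b_1. \]

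The main obstacle is this arithmetic inequality. Normalizing by $s = a_1/a_0$ and $t = b_1/b_0$, the hypothesis becomes $st \leq 1$ and the inequality becomes $((1+s)(1+t))^p \leq 1 + s^p + t^p$ with $p = \log_4 3$. Iterating Claim~\ref{cl:x^p} alone yields only the weaker $((1+s)(1+t))^p < 1 + s^p + t^p + (st)^p$, off by $(st)^p \leq 1$, so closing the gap depends on the specific value $4^p = 3$. I would close it by symmetrization: along each level curve $st = c \leq 1$, I expect the difference $1 + s^p + t^p - ((1+s)(1+t))^p$ to be minimized at the symmetric point $s = t = \sqrt{c}$, and a second-derivative check at that point reduces the local-minimum assertion to the pointwise inequality $p \cdot x^{p-1} > (1+x)^{2p-2}$ on $x \in (0, 1]$, which follows from $p > 3/4$ (equivalently $4^3 < 3^4$). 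This leaves the one-variable inequality $(1+x)^{2p} \leq 1 + 2x^p$ on $[0, 1]$, which has equality at $x = 0$ and $x = 1$ (the latter because $2^{2p} = 3$), and holds throughout the interval by concavity of $f(x) := 1 + 2x^p - (1+x)^{2p}$: since $f''(x) = 2p(p-1)x^{p-2} - 2p(2p-1)(1+x)^{2p-2} < 0$ (using $p < 1 < 2p$) and $f(0) = f(1) = 0$, concavity forces $f \geq 0$ on $[0,1]$.
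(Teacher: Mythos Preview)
Your inductive framework and reduction to the arithmetic inequality $(a_0b_0)^p + (a_0b_1)^p + (a_1b_0)^p \geq ((a_0+a_1)(b_0+b_1))^p$ (under $a_0b_0 \geq a_1b_1$, $p = \log_4 3$) match the paper exactly: the paper proves the same auxiliary claim $|Q_1 - Q_2| \geq (|Q_1|\,|Q_2|)^p$ by the same splitting along $P_\ell = P_{\ell-1}\cup(r_\ell + P_{\ell-1})$ and arrives at the same inequality. The divergence is entirely in how that two-variable inequality is attacked.

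Your symmetrization step has a real gap. Verifying that $s=t=\sqrt{c}$ is a \emph{local} minimum of $G(s,t)=1+s^p+t^p-((1+s)(1+t))^p$ along the level curve $st=c$ does not make it the \emph{global} minimum: the curve is unbounded, and you have not ruled out further critical points (which by the $s\leftrightarrow t$ symmetry would come in pairs and could sit lower). Concretely, on the extremal curve $st=1$ your assertion ``the minimum of $G$ is at $s=t=1$'' is, after multiplying through by $t^p$, exactly the statement $t^{2p}+t^p+1\ge (1+t)^{2p}$ for all $t>0$ --- this is precisely the paper's $f_1(t)\ge 0$, the case the paper handles only with computer assistance. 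So on that curve your plan assumes what remains to be proved, and on the interior curves $st=c<1$ you have only a second-derivative check at a single point. The paper takes a different slice: it fixes $t$ and shows $G$ is unimodal in $s$, which cleanly reduces to the endpoints $s=0$ (easy via Claim~\ref{cl:x^p}) and $st=1$ (the $f_1$ case). Your diagonal endgame --- strict concavity of $x\mapsto 1+2x^p-(1+x)^{2p}$ on $(0,1]$ together with the boundary values $0$ at $x=0$ and $x=1$ --- is correct and, if the level-curve minimum claim could be upgraded from local to global, would yield a fully elementary proof avoiding the paper's numerical verification. But that upgrade is where the real content lies, and it is not supplied.
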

\begin{proof}
Let $0 \le \ell \le n$ and let $t_1$ and $t_2$ be two reals of the form $y_{\ell+1}\cdot r_{\ell+1} + y_{\ell+2}\cdot r_{\ell+2} + \cdots + y_{n} \cdot r_{n}$ with $y_{\ell+1},y_{\ell+2},\ldots,y_n\in \{0,1\}$ (we allow the case where $t_1=t_2$).
When $\ell=n$, we have $t_1=t_2=0$.
Set $Q_1 = Q \cap (t_1+P_{\ell})$ and $Q_2 = Q \cap (t_2+P_{\ell})$.

We claim that
\begin{equation}\label{eq:mainInequality}
|Q_1 - Q_2| \geq (|Q_1| \cdot |Q_2|)^p.
\end{equation}
The statement of the lemma follows by applying \eqref{eq:mainInequality} with $\ell = n$ and $t_1=t_2=0$.
Indeed, in this case $|Q_1|=|Q_2|=k$, and $2p=\log_2 3$.
It thus remains to prove \eqref{eq:mainInequality}.

If either $|Q_1|= 0$ or $|Q_2| = 0$, then \eqref{eq:mainInequality} clearly holds.
We thus assume that $|Q_1|\ge 1$ and $|Q_2|\ge 1$.
We prove \eqref{eq:mainInequality} by induction on $\ell$.
For the induction basis, consider the case where $\ell=0$.
In this case we have that $|Q_1|= |Q_2| = 1$, and indeed $|Q_1 - Q_2| = 1 = (|Q_1| \cdot |Q_2|)^p$.

For the induction step, consider some $\ell>0$ and assume that \eqref{eq:mainInequality} holds for every smaller value of $\ell$.	
Recall that both $Q_1$ and $Q_2$ are contained in translated copies of $P_{\ell}$, and that $P_{\ell}= P_{\ell-1}\bigcup (r_{\ell} + P_{\ell-1})$.
We partition each of these translated copies of $P_{\ell}$ into the two corresponding copies of $P_{\ell-1}$, and separately consider the portion of $Q$ in each of the four resulting parts.
In particular, let
\begin{align*}
A &= Q_1 \cap (t_1+P_{\ell-1}), \\
B &= Q_1 \cap (t_1+r_\ell + P_{\ell-1}), \\
C &= Q_2 \cap (t_2+P_{\ell-1}), \\
D &= Q_2 \cap (t_2+r_\ell + P_{\ell-1}).
\end{align*}

Recall that every difference in $P_n-P_n$ is of the form $x_1\cdot r_1 + x_2\cdot r_2 + \cdots + x_{n} \cdot r_{n}$ with $x_1,x_2,\ldots,x_n\in \{-1,0,1\}$.
Note that for every difference in $(A-C)\cup(B-D)$ we have $x_\ell=0$.
Similarly, for every difference in $A-D$ we have $x_\ell=-1$, and for every difference in $B-C$ we have $x_\ell=1$.
By the definition of $\{r_1,\ldots,r_n\}$, we obtain that the three sets $(A-C)\cup(B-D), A-D, B-C$ are pairwise disjoint.
This in turn implies that
\[ |Q_1-Q_2|\ge |A-D| + |B-C| + \max\{|A-C|,|B-D|\}. \]

Set $a = |A|, b=|B|, c=|C|$, and $d=|D|$.
By applying the induction hypothesis with $\ell-1$, $t_1$, and $t_2+r_\ell$, we obtain that $|D-A|\ge (ad)^p$.
Similarly, the induction hypothesis implies that $|C-A|\ge (ac)^p$, $|C-B|\ge (bc)^p$, and $|D-B|\ge (bd)^p$.
Assume without loss of generality that $ac \geq bd$.
Then
\begin{equation}\label{ThreeTermsDiff}
|Q_1 - Q_2| \geq (ad)^p + (bc)^p + (ac)^p.
\end{equation}

To complete the induction step, we show that
\begin{equation}\label{eq:TightInequality}
(ac)^p + (ad)^p + (bc)^p \geq ((a+b)(c+d))^p.
\end{equation}
Combining this with \eqref{ThreeTermsDiff} would indeed imply $|Q_1 - Q_2|\geq ((a+b)(c+d))^p=(|Q_1|\cdot |Q_2|)^p$.

\parag{A technical calculation.}
By the assumption $|Q_1|\neq 0$, we cannot have $a=b=0$.
By the assumption $|Q_2|\neq 0$, we cannot have $c=d=0$.
If $a=c=0$, then we obtain \eqref{eq:mainInequality} by applying the induction hypothesis with $\ell-1$, $t_1+r_\ell$, and $t_2+r_\ell$.
We can symmetrically handle the cases of $a=d=0$, $b=c=0$, and $b=d=0$.
We may thus assume that at most one of $a,b,c,d$ is zero.

Since $ac\ge bd$, we get that $a\neq 0$ and that $c\neq 0$ (otherwise we would also have that $b=0$ or $d=0$).
If $b=0$ then \eqref{eq:TightInequality} becomes $(ac)^p + (ad)^p \geq (ac+ad)^p$.
By Claim \ref{cl:x^p}, we have that $(ac)^p + (ad)^p > (ac+ad)^p$ holds for every $a,c,d>0$.
We may thus assume that $b\neq 0$.
By a symmetric argument we may also assume that $d\neq 0$.

We set $\gamma = \frac{bd}{ac}$ and $x= d/c$.
Note that $0< \gamma \le 1$ and $x>0$.
Multiplying both sides of \eqref{eq:TightInequality} by $x^p/(ac)^p$ gives
\[ x^p + x^{2p} + \gamma^p \geq \left(x+x^2+\gamma+\gamma x\right)^p. \]
Rearranging this leads to
\[ x^p + x^{2p} + \gamma^p - (x+1)^p(x+\gamma)^p \ge 0. \]

Let $f(x,\gamma) = x^p + x^{2p} + \gamma^p - (x+1)^p(x+\gamma)^p$.
By the above, to prove \eqref{eq:TightInequality} it suffices to prove that $f(x,\gamma)\ge 0$ for every $0< \gamma \le 1$ and $x>0$.

We consider the first partial derivative
\[ \frac{\partial }{\partial \gamma} f(x,\gamma) = p\cdot \gamma^{p-1} - p\cdot (1+x)^p(\gamma + x)^{p-1}. \]
This expression is positive if and only if $\gamma^{p-1}> (1+x)^p(\gamma + x)^{p-1}$, or equivalently
\[\gamma < (1+x)^{p/(p-1)}(\gamma + x).  \]

Setting $r = (1+x)^{p/(p-1)}$, the above inequality becomes $\gamma < r(\gamma+x)$.
Rearranging this leads to
\[ \gamma < x\cdot \frac{r}{1-r}. \]

Since $p-1<0$ and $(1+x)^p>1$, we have that $0<r<1$.
This implies that $x\cdot \frac{r}{1-r} >0$.
Thus, for a fixed $x>0$, the function $f(x,\gamma)$ is increasing when $\gamma<x\cdot\frac{r}{1-r}$ and decreasing when
$\gamma>x\cdot\frac{r}{1-r}$.
We conclude that, to check that $f(x,\gamma)$ is non-negative when $0< \gamma\le 1$ and $x>0$, it suffices to consider the cases of $\gamma=0$ and $\gamma=1$.

Set $f_0(x)=f(x,0) = x^p + x^{2p} - (x^2+x)^p$.
Then $f_0'(x) = px^{p-1}+2px^{2p-1}-p(x^2+x)^{p-1}$.
Since  $x>0$ and $p-1<0$, we get that
\[ f_0'(x) > px^{p-1}-p(x^2+x)^p = p\left(x^{p-1}-\left(x^2+x\right)^{p-1}\right) > 0.\]
That is, $f_0(x)$ is an increasing function.
Since $f_0(0)=0$, we get that $f_0(x)$ is non-negative when $x\ge0$.

It remains to verify that $f(x,1)$ is non-negative for every $x>0$.
Set $f_1(x)=f(x,1) = x^p + x^{2p} +1 - (x+1)^{2p}$.
Then
\begin{align*}
f_1'(x) &= px^{p-1}+2px^{2p-1} -2p(x+1)^{2p-1}, \\
f_1''(x) &= p(p-1)x^{p-2}+2p(2p-1)x^{2p-2} -2p(2p-1)(x+1)^{2p-2}.
\end{align*}

We observe that $f_1(1) = 0$, $f'_1(1) =0$, and $f_1''(1) = 2p^2-3p/2 \approx 0.067$.
Thus, $x=1$ is a local minimum of $f_1(x)$.
By using Wolfram Mathematica, we found that the only roots of $f_1(x)$ are $x=0$ and $x=1$ (see below for more details).
Since $f_1(x)$ is a continuous function and $x=1$ is a local minimum, we conclude that $f_1(x)$ is non-negative when $x\ge 0$.

Since $f_1(x)$ is non-negative when $x\ge 0$, we conclude that \eqref{eq:TightInequality} holds.
This completes the proof of \eqref{eq:mainInequality}, and thus the proof of the lemma.

\parag{Precision issues.} One may rightly complain that Wolfram Mathematica could have missed another root of $f_1(x)$.
There seem to be two main issues that could potentially cause this: the range in which we look for potential zeros is an infinite interval (specifically, $[0,\infty)$), and a root might be missed in a region where the graph is very close to zero due to precision issues.
We now address both of these issues.

First, consider the special case where $x\ge 10$, and that there exists a positive integer $m$ satisfying $ac=bd=m$ and $ad=10m$.
Since $x=d/c$, we get that $d\ge 10c$.
Since $ac\ge bd$, we get that $a\ge 10b$.
This in turn implies that $ad\ge 10bd$.
Combining the above leads to
\[ (ac)^p + (ad)^p = (1+10^p)\cdot m^p > 12^pm^p = (ac+bd+ad)^p. \]
By Claim \ref{cl:x^p}, adding $(bc)^p$ to the left side of the inequality will cause a larger increase than adding $bc$ inside the parentheses on the right-hand side.
That is, in this special case, for any value of $bc$ we have \eqref{eq:TightInequality}.
Claim \ref{cl:x^p} also implies that after increasing the values of $ac$ and $ad$ by any amount, \eqref{eq:TightInequality} still holds.
That is, \eqref{eq:TightInequality} holds when $x\ge 10$, so there is no need to check for additional zeros of $f_1(x)$ in this case.

A symmetric argument holds when $x\le 1/10$.
In that case we have $c\ge 10d$, implying that $bc \ge 10bd$.
Thus, we are also not interested in zeros of $f_1(x)$ when $x\le 1/10$.
Combining the above, we get that it suffices to look for zero of $f_1(x)$ in the interval $(1/10,10)$.
That is, we no longer have the issue of finding zeros in an infinite interval.

It remains to verify that precision issues do not lead to missing roots in areas where $f_1(x)$ is close to zero.
A plot of $f_1(x)$ in the interval $(1/10,10)$ shows that the function is only close to zero around $x=1$.
Thus, missing roots due to precision issues can only occur in a small neighborhoods of $x=1$.

Recall that $x=1$ is a local minimum of $f_1(x)$ and that $f_1''(1) = 2p^2-3p/2 =2p^2-3p/2$.
The second order Taylor approximation of $f_1(x)$ at $x=1$ is
\[ F(x) = \frac{f_1''(1)}{2}(x-1)^2 = \left(p^2-\frac{3p}{4}\right)\cdot (x-1)^2 \approx 0.0337(x-1)^2. \]

To bound the error term of the approximation, we observe that
\begin{align*}
f_1^{(3)}(x) &= p(p-1)(p-2)x^{p-3} + 2p(2p-1)(2p-2)x^{2p-3} - 2p(2p-1)(2p-2)(x+1)^{2p-3} \\
&= p(p-1)\left((p-2)x^{p-3} + 4(2p-1)x^{2p-3} - 4(2p-1)(x+1)^{2p-3} \right).
\end{align*}
It is not difficult to verify that, when $0.6 \le x \le 2$, we have $\left|f_1^{(3)}(x)\right| < 0.1$.
Thus, the error of the second order approximation is $E(x)< \frac{0.1}{6}(x-1)^3$.
When $0.6\le x\le 2$ and $x\neq 1$, we have that $F(x) - E(x)> 0$, so the only root of $f(x)$ in $[0.6,2]$ is $x=1$.
This eliminates the precision issue around $x=1$.

It seems plausible that additional arguments, similar to the ones above, could completely eliminate the use of a computer program.
However, most likely such arguments would add several pages of tedious calculations.
We thus chose not to pursue this direction.
\end{proof}

Claim \ref{cl:DiffProps} and Lemma \ref{le:subset} immediately lead to Theorem \ref{th:ArithUpper}.

\begin{proof}[Proof of Theorem \ref{th:ArithUpper}.]
Let $n=2^m$ for some integer $m$.
Then the theorem holds by taking the set $P_m$ as described above.
Indeed, by Claim \ref{cl:DiffProps}, we have that $|P_m - P_m| = n^{\log_2 3}$.
By Lemma \ref{le:subset}, the set $P_m$ satisfies the required local property.
\end{proof}

\end{document}